\documentclass[reqno,12pt]{amsart}
\usepackage{amssymb}
\usepackage{mathrsfs}
\usepackage{txfonts}
\usepackage{amsfonts}
\usepackage{amstext}
\usepackage{amssymb}
\usepackage{amsmath}
\usepackage{graphicx}
\usepackage{hyperref}
\usepackage{url}
\usepackage{tikz}
\usepackage[final]{showkeys}
\usepackage{float}

\usetikzlibrary{patterns}
\hypersetup{
        colorlinks   = true,
        citecolor    = blue,
        linkcolor    = blue,
        urlcolor     = blue
}
\newtheorem{theorem}{Theorem}[section]

\newtheorem{lemma}[theorem]{Lemma}

\theoremstyle{definition}

\newenvironment{acknowledgement}{\smallskip{\sc Acknowledgement.}\rm}{\smallskip}
\numberwithin{equation}{section}

\def \color#1{}

\def\tbigcup{\mathop{\textstyle \bigcup }}

\def\Xint#1{\mathchoice
{\XXint\displaystyle\textstyle{#1}}%
{\XXint\textstyle\scriptstyle{#1}}%
{\XXint\scriptstyle\scriptscriptstyle{#1}}%
{\XXint\scriptscriptstyle\scriptscriptstyle{#1}}%
\!\int}
\def\XXint#1#2#3{{\setbox0=\hbox{$#1{#2#3}{\int}$ }
\vcenter{\hbox{$#2#3$ }}\kern-.6\wd0}}

\def\oint{\Xint-}

\def\FRAME#1#2#3#4#5#6#7#8
{
 \begin{figure}[ptbh]
 \begin{center}
 \includegraphics[height=#3]{#7}
 \caption{#5}
 \label{#6}
 \end{center}
 \end{figure}
}

\def\enddoc{\end{document}}

\allowdisplaybreaks
\input{tcilatex}

\begin{document}
\title{A positive answer to a symmetry conjecture on homogeneous IFS}
\author[Zhang]{Junda Zhang}
\address{School of Mathematics, South China University of Technology, Guangzhou 510641, China.}
\email{summerfish@scut.edu.cn}
\date{\today }

\begin{abstract}
Let $\Phi$ and $\Psi$ be two homogeneous IFSs satisfying the OSC, with opposite common contraction factors, and having the same attractor $K\subset\mathbb{R}$. We show that $K$ is symmetric. This answers a question of Feng and Wang [Adv. Math. 222 (2009), 1964–1981].
\end{abstract}

\subjclass[2020]{28A80}
\keywords{generating IFS, homogeneous, open set
condition}
\maketitle

\section{Introduction}
In this paper, a finite set $\Phi =\{\phi _{i}\}_{i=1}^{m}(m\geq 2)$ of
distinct contractive similarities in $\mathbb{R}^{n}$ is
called a (standard) \emph{iterated function system} (IFS). The \emph{attractor} of $%
\Phi $ is the unique nonempty compact subset $K\subset \mathbb{R}^{n}$ such
that
\begin{equation}
K=\Phi (K):=\tbigcup_{i=1}^{m}\phi _{i}(K).  \label{at}
\end{equation} We call $\Phi$ a \emph{generating IFS of }$K$.
In this paper, we always assume that $K\subset \mathbb{R}$ is not a singleton. We call $\Phi $
\emph{homogeneous} if each map is in the form $\phi _{i}(x)=rx+b_{i}$ for a
common contraction factor $r\in (-1,1)\setminus \{0\}$ and some different translations $b_{i}\in \mathbb{R}$, and in this case we write  $\Phi=rx+\{b_{i}\}_{i=1}^{m}. $

Separation conditions for IFSs are often required. We say that an IFS $\Phi
=\{\phi _{i}\}_{i=1}^{m}$ satisfies the \emph{open set condition} (\textrm{%
OSC}), if there exists a nonempty bounded open set $U\subseteq \mathbb{R}%
^{n} $, such that
\begin{equation}
\tbigcup_{i=1}^{m}\phi _{i}(U)\subseteq U  \label{OSC}
\end{equation}%
with the union disjoint. If further $U$ can be taken to be a convex set in (\ref{OSC}), we say that the \emph{convex open set condition} (\textrm{COSC}) is satisfied. When
the union in (\ref{at}) is disjoint, we say that $\Phi $ satisfies the \emph{%
strong separation condition} (SSC).

A fundamental problem in fractal geometry is that, given a fixed $K\subset \mathbb{R}^{n}$, what can be said about the structure of its generating IFSs, with or without separation conditions? Feng and
Wang \cite{FengWang2009} initiated this study for $K\subset \mathbb{R}$ under the OSC. They proved that
all homogeneous generating IFSs with the OSC form a finitely-generated semigroups (if non-empty) when equipped with
the composition, and also gave some sufficient conditions for these semigroups to
have a minimal element (with or without homogeneity). They also provided some special examples where a
minimal generating IFS does not exist. Later, Deng and Lau generalised the finitely-generated property for $K\subset \mathbb{R}^{n}$ under homogeneity and the SSC in \cite{DengLau2013}, and then relaxed the SSC to the OSC in \cite{DengLau2017}. Some further results on specific classes of self-similar sets were given, for
example, on two connected fractals \cite{LiYao2016} and on a construction
with complete overlaps \cite{KongYao2021}.

We are concerned with the following open problem \cite[Open Question 1]{FengWang2009}: ``Let $\Phi$ and $\Psi$ be two homogeneous IFSs satisfying the OSC, with opposite common contraction factors, and having the same attractor $K\subset\mathbb{R}$. Is it true that $K$ is symmetric?"

This question was partially answered in \cite[Lemma 3.4]{FengWang2009} under the COSC, and in \cite[Theorem 1.5]{XiaoETDS} under the SSC. But under a weaker condition OSC, different approach is required. If the answer is positive, then the statements in the finitely-generated theorem \cite[Theorem 1.3]{FengWang2009} will be much cleaner, as stated in \cite[Section 6]{FengWang2009}.

Our result positively answers this question.
\begin{theorem}
Let $\Phi$ and $\Psi$ be two homogeneous IFSs satisfying the OSC, with opposite common contraction factors, and having the same attractor $K\subset\mathbb{R}$. Then $K$ is symmetric.\label{1.1}\end{theorem}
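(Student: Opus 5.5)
We normalise first. Translating and rescaling, we may assume $\operatorname{conv}K=[0,1]$. Let $\Phi=rx+\{a_i\}_{i=1}^m$ and $\Psi=-rx+\{b_j\}_{j=1}^{m'}$ with $r\in(0,1)$. We then pass to iterates: $\Phi^2=r^2x+A$ and $\Psi^2=r^2x+B$ both generate $K$, both satisfy the OSC, and both have positive ratio $r^2$. Since $K$ has Hausdorff dimension $s=\log m/\log(1/r)$ with $0<\mathcal H^s(K)<\infty$ under the OSC, both IFSs have the same number of maps, $m=m'$. The goal is to show that $\mu:=\mathcal H^s|_K$ is invariant under the reflection $R(x)=1-x$. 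This suffices, because $K=\operatorname{supp}\mu$. The natural-measure route is forced on us: under the OSC the self-similar measure with equal weights for any generating IFS equals normalised $\mathcal H^s|_K$. Hence $\mu$ is simultaneously the uniform self-similar measure for $\Phi$ and for $\Psi$.

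The key identity comes from these two self-similarity relations. From $\Phi$ we get
\[ \mu=\tfrac1m\sum_i \mu\circ\phi_i^{-1}, \]
and from $\Psi$ we get
\[ \mu=\tfrac1m\sum_j \mu\circ\psi_j^{-1}, \]
where $\psi_j(x)=-rx+b_j$. Write $\psi_j=\tau_j\circ(rR)$ with $\tau_j$ a translation. Let $\nu:=\mu\circ R^{-1}$, and write $\hat f$ for the Fourier transform. The two relations become
\[ \hat\mu(\xi)=P(\xi)\,\hat\mu(r\xi),\qquad \hat\mu(\xi)=Q(\xi)\,\hat\nu(r\xi)\ (\text{up to the phase } e^{-2\pi i \xi\cdot\text{const}}), \]
where $P(\xi)=\frac1m\sum e^{-2\pi i a_i\xi}$ and $Q$ is the corresponding exponential sum for the $b_j$. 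Reflecting the $\Psi$ relation gives a relation for $\nu$ of the form $\hat\nu(\xi)=\overline{Q}\text{-type}(\xi)\,\hat\mu(r\xi)$. Iterating, both $\hat\mu$ and $\hat\nu$ become infinite products of trigonometric polynomials with the same ratio. We then aim to show that the digit sets $A$ and $1-r^{-1}\cdot(\ldots)$ of the reflected system produce the same measure. Equivalently, $\nu$ is the uniform self-similar measure for the IFS $R\Phi R=rx+\{1-r-a_i\}$, which also has attractor $R(K)$.

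So the problem reduces to the following claim. If $K$ is generated by the homogeneous OSC IFSs $\Phi=rx+A$ and $\Psi$, and $\Psi\circ\Psi=r^2x+B$ with the digit set $B$ coming from the relation $\psi_j\psi_k$, then $B$ is symmetric modulo the structure forced by $A$. More concretely, I would use the Feng–Wang finitely-generated/commensurability result available at the start of the paper: all homogeneous OSC generating IFSs of $K$ have ratios that are powers of a common $r_0$ (up to sign), and the corresponding digit sets are determined by $K$ at each level. With this, we compare the level-$n$ pieces. The family of cylinders $\{\phi_w(K):|w|=2n\}$ and the family $\{\psi_v(K):|v|=2n\}$ have the same size $r^{2n}$ and the same count. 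Under the OSC, two such families covering $K$ with equal measure must coincide as families of sets. The reason is that a cylinder $\psi_v(K)=t+r^{2n}K$ is contained in $K$, and by a density/measure argument at the left endpoint $\min K=0$ it must be a union of $\Phi$-cylinders. The same holds for the odd level $2n+1$: the family of $\Psi$-cylinders $-r^{2n+1}K+c$ must equal the family of $\Phi$-cylinders $r^{2n+1}K+d$. In particular, some $-r^{2n+1}K+c$ equals some $r^{2n+1}K+d$, which gives $K=-K+\text{const}$, i.e.\ $K$ is symmetric.

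The main obstacle is exactly the matching step: showing that a reflected cylinder $-\rho K+c\subset K$ coincides with a non-reflected cylinder $\rho K+d$ of the same size, when only the OSC is available. Overlaps of measure zero are allowed, so the cylinders need not be separated, and this is where the COSC and SSC arguments fail. I would first try a measure-theoretic argument. Since $\mu(\psi_v K)=m^{-|v|}=\mu(\phi_w K)$ and the cylinders at each level overlap in null sets, the $\Psi$-cylinder $E=-\rho K+c$ is covered by $\Phi$-cylinders of level $|v|$ up to null sets. Looking at the leftmost point of $E$, pick the $\Phi$-cylinder $F=\rho K+d$ containing a neighbourhood in $E$ of that point with positive measure. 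Then compare $\mu|_E$ and $\mu|_F$ via the uniqueness of the density of $\mu$ at corresponding points, using that $\mu$-a.e.\ local structure is the same. If that fails, the fallback is the Fourier identity above: show that $|P|$ and $|Q|$ generate the same infinite product, and deduce $\hat\nu=\hat\mu$ times a pure phase, hence $\nu$ is a translate of $\mu$. This gives symmetry directly.
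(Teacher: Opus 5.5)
Your set-up is sound. The Moran equation gives equal numbers of maps, $\mathcal H^s|_K$ is the uniform self-similar measure for both systems, and a single coincidence $-\rho K+c=\rho K+d$ does force $K=-K+\mathrm{const}$. Your even-level claim is also true: $\Phi^2$ and $\Psi^2$ are OSC generating IFSs of $K$ with the same positive ratio $r^2$, so they coincide, which also follows from your Fourier relation and analyticity of $\hat\mu$. The gap is the odd-level matching, which you simply assert (``the same holds for the odd level''). That statement is essentially a restatement of the conclusion: if $K=-K+t$ then $-\rho K+c=\rho K+(c-\rho t)$ for all $\rho,c$, and conversely. So the argument is circular at exactly the step you flag as the main obstacle, and neither remedy closes it. In the measure-theoretic sketch, knowing $\mu(E\cap F)>0$ for some $\Phi$-cylinder $F$ gives no reason why $E=F$ rather than $E$ being shared among several $\Phi$-cylinders that meet in null sets; no mechanism is supplied. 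The Fourier fallback is vacuous as stated. For every positive measure $\mu$, the reflection $\nu=\mu\circ R^{-1}$ satisfies $\hat\nu(\xi)=e^{-2\pi i\xi}\overline{\hat\mu(\xi)}$, so $\hat\nu$ is always $\hat\mu$ times a unimodular factor. What you would need is that this factor is a linear phase $e^{-2\pi ic\xi}$. Comparing only moduli, as you propose, yields $|P|=|Q|$, i.e.\ $\{a_i\}-\{a_i\}=\{b_j\}-\{b_j\}$ as multisets. Homometric pairs such as $\{0,1,4,10,12,17\}$ and $\{0,1,8,11,13,17\}$ show that a difference multiset does not determine a set up to translation and reflection.

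The missing idea is to work with the digit sets of the second-level compositions, including the mixed ones, and argue combinatorially. Write $\Phi=rx+A$, $\Psi=-rx+B$ with $a_1<\dots<a_n$ and $b_1<\dots<b_n$. The identity $\Phi\circ\Phi=\Psi\circ\Psi$ gives $A+rA=B-rB$ with all $n^2$ elements distinct. The OSC for $\Phi\circ\Psi=\Psi\circ\Phi$ gives that $A+rB$ and $B-rA$ also have $n^2$ distinct elements. Now write $b_k-rb_n=a_{s(k)}+ra_{t(k)}$ and $a_k+ra_1=b_{u(k)}-rb_{v(k)}$. Distinctness of the mixed sums makes $s$ and $u$ permutations, and an induction from the smallest element upward gives $b_i-rb_n=a_i+ra_1$ for all $i$, i.e.\ $B=A+C$. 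Substituting this into $A(x)A(x^r)=B(x)B(x^{-r})$, where $A(x)=\sum_i x^{a_i}$, yields $A(t)=t^{C(1-r)/r}A(t^{-1})$, so $A$, and hence $K$, is symmetric. Your even-level observation is the first of these identities, but you never extract the digit-set equation from it. You also never use the mixed compositions, which supply the injectivity that drives the argument.
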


Although our proof seems quite short, we have tried several different strategies with tools from many areas to tackle this problem. It turns out that other strategies are much more involved, lengthy but unsatisfying.

The structure of this paper is simple. We first state two lemmas used in our proof, and then present the proof of Theorem  \ref{1.1}.
We will frequently use the following notation.
Let $A = \{a_i\}_{i=1}^n$ and $B = \{b_j\}_{j=1}^m$ be two multisets of real numbers, and $r$ be a real number, then
$$rA:=\{ra_i: 1 \le i\le n\},\ A+B:=\{a_i + b_j : 1 \le i\le n ,1 \le j\le m\}.$$

\section{Two Lemmas}
We first state an easy but useful one.
\begin{lemma}\label{2.1}
Let $A = \{a_i\}_{i=1}^n$ and $B = \{b_i\}_{i=1}^n$ be two multisets of real numbers satisfying that
$$A + rA = B - rB,$$
where $r > 0$. Suppose further that
$b_i - a_i$ equals to the same number $C$ for all $1\leq i\leq n.$
Then
$$A = \dfrac{C(1-r)}{r} - A.$$
\end{lemma}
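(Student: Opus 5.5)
The plan is to turn the multiset identity into an identity between exponential sums, cancel the common factor, and then read off the multiset from the resulting sum. First I rewrite the right-hand side using $b_i=a_i+C$. As multisets, $B=A+C$, so
$$B-rB=(A+C)-r(A+C)=A-rA+C(1-r).$$
The hypothesis therefore becomes the multiset identity
$$A+rA=A-rA+C(1-r).$$

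To exploit this, I encode a finite multiset $E=\{e_k\}$ by the exponential sum $f_E(t)=\sum_k e^{te_k}$ for $t\in\mathbb{R}$. This map turns Minkowski sums of multisets into products and scalings into rescaled arguments. Write $f=f_A$. The identity above gives, for all real $t$,
$$f(t)\,f(rt)=f(t)\,f(-rt)\,e^{tC(1-r)}.$$
Since $f(t)>0$ for every real $t$, I can cancel it:
$$f(rt)=f(-rt)\,e^{tC(1-r)}.$$
Because $r>0$, substituting $s=rt$ covers all real $s$. Setting $D=C(1-r)/r$, this yields
$$\sum_{i=1}^n e^{sa_i}=\sum_{i=1}^n e^{s(D-a_i)}\qquad\text{for all } s\in\mathbb{R}.$$
The right-hand side is exactly $f_{D-A}(s)$.

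The remaining step, which I expect to be the only one needing care, is the uniqueness claim: if two finite multisets $E$ and $E'$ satisfy $f_E\equiv f_{E'}$ on $\mathbb{R}$, then $E=E'$. I would prove it by induction on the cardinality, using asymptotics as $s\to+\infty$.

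1. Let $M$ be the largest element of $E$ with multiplicity $p$, and $M'$ the largest element of $E'$ with multiplicity $p'$.
2. Then $e^{-sM}f_E(s)\to p$ as $s\to+\infty$, while $e^{-sM}f_{E'}(s)$ tends to $0$, $p'$, or $\infty$ according as $M'<M$, $M'=M$, or $M'>M$.
3. Hence $M=M'$ and $p=p'$.
4. Remove these $p$ copies of $M$ from both multisets; the exponential sums stay equal, and the induction hypothesis applies.

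An equivalent route is the linear independence of the functions $e^{\lambda s}$ for distinct $\lambda$.

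Applying this uniqueness to $E=A$ and $E'=D-A$ gives
$$A=D-A=\dfrac{C(1-r)}{r}-A,$$
as claimed.
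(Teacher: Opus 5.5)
Your proof is correct and follows essentially the same route as the paper. Your exponential sum $f_A(t)=\sum_i e^{ta_i}$ is the paper's generating function $A(x)=\sum_i x^{a_i}$ under $x=e^t$, and you likewise cancel the positive common factor, rescale by $r$, and compare exponents; the one difference is that you justify the final uniqueness-of-exponents step via asymptotics as $s\to+\infty$, which the paper simply asserts as an equality of ``Laurent polynomials'' even though the exponents are arbitrary reals.
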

\begin{proof}
Define the generating functions
\begin{equation}\label{def}
  A(x) = \sum_{i=1}^n x^{a_i}, \qquad B(x) = \sum_{i=1}^n x^{b_i}.
\end{equation}
Since $b_i = a_i + C$ for all $1\leq i\leq n$, we have  $$B(x) = x^C A(x),$$
and so\begin{equation}\label{cc}
B(x) B(x^{-r}) = \bigl( x^C A(x) \bigr) \bigl( x^{-rC} A(x^{-r}) \bigr) = x^{C(1-r)} A(x) A(x^{-r}).
\end{equation}
The multiset equality $A + rA = B - rB$ implies that
$$A(x) A(x^r) = B(x) B(x^{-r}).$$
Plugging (\ref{cc}) into the above equation gives that
$$A(x) A(x^r) = x^{C(1-r)} A(x) A(x^{-r}).$$
Since $A(x)$ is nonzero, it follows that
$$A(x^r) = x^{C(1-r)} A(x^{-r}).$$
Letting $t = x^r$, so that $x = t^{1/r}$ and
$$A(t) = t^{C(1-r)/r} A(t^{-1}).$$
It follows by definition (\ref{def}) that
$$\sum_{i=1}^n t^{a_i} = t^{C(1-r)/r} \sum_{i=1}^n t^{-a_i} = \sum_{i=1}^n t^{C(1-r)/r - a_i}.$$
This equality of Laurent polynomials means that the multisets of exponents on both sides coincide, that is
$$\{ a_i : i = 1,\dots,n \} = \left\{ \frac{C(1-r)}{r} - a_i : i = 1,\dots,n \right\},$$ showing the desired.
\end{proof}
The second lemma is key to the proof. Similar inclusions to (\ref{lem}) also appear in the proof of \cite[Lemma 3.4]{FengWang2009} and \cite[Theorem 1.5]{XiaoETDS}.
\begin{lemma}\label{2.2}
Let $A = \{a_i\}_{i=1}^n$ and $B = \{b_i\}_{i=1}^n$ be two sets of real numbers with elements arranged in increasing order: $a_1 < a_2 < \cdots < a_n$ and $b_1 < b_2 < \cdots < b_n$. Let $r > 0$. Assume that the set $A + rA = B - rB$ has cardinality $n^2$, that is, all elements are distinct. Further assume that both sets $rB + A$ and $-rA + B$ have cardinality $n^2$. Then for every $1 \leq i \leq n$,
\begin{equation}\label{lem}
  b_i - r b_n = a_i + r a_1.
\end{equation}
\end{lemma}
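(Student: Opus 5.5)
The plan is to prove \eqref{lem} by induction on $i$, comparing at each step the next element of the translate $A+ra_1$ with the next element of the translate $B-rb_n$. Both are subsets of the common set $S:=A+rA=B-rB$. The two extra cardinality hypotheses translate into ``difference'' statements. That $rB+A$ has $n^2$ distinct elements means
\[
a_j-a_p=r(b_l-b_q) \quad\text{forces } j=p \text{ and } l=q .
\]
Likewise, that $-rA+B$ has $n^2$ distinct elements means
\[
b_j-b_s=r(a_t-a_u) \quad\text{forces } j=s \text{ and } t=u .
\]
These are the only facts I will use besides the orderings of $A$ and $B$.

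Assume \eqref{lem} holds for all indices $i\le k$, where $0\le k<n$ and the case $k=0$ is vacuous. Put
\[
x:=a_{k+1}+ra_1, \qquad y:=b_{k+1}-rb_n .
\]
The goal is $x=y$, and I will rule out $x<y$ and $y<x$ separately.

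\emph{Case $x<y$.} Since $x\in S=B-rB$, write $x=b_p-rb_q$. Because $b_q\le b_n$, we get $b_p-rb_n\le x<y=b_{k+1}-rb_n$, hence $p\le k$. By the induction hypothesis, $b_p-rb_n=a_p+ra_1$.
\begin{itemize}
\item If $q=n$, then $x=a_p+ra_1$ with $p\le k$. This contradicts $a_{k+1}>a_p$; in particular the case $k=0$ is impossible outright, since no such $p$ exists.
\item If $q\neq n$, substituting $b_p=a_p+ra_1+rb_n$ into $x=b_p-rb_q$ gives $a_{k+1}-a_p=r(b_n-b_q)$. Equivalently $a_{k+1}+rb_q=a_p+rb_n$, two distinct representations in $A+rB$, contradicting the first difference statement.
\end{itemize}

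\emph{Case $y<x$.} This is symmetric, with the roles of $A$ and $B$ swapped. Write $y=a_s+ra_t\in A+rA$. Since $a_t\ge a_1$, we get $a_s+ra_1\le y<x=a_{k+1}+ra_1$, so $s\le k$. By the induction hypothesis, $a_s+ra_1=b_s-rb_n$.
\begin{itemize}
\item If $t=1$, then $y=b_s-rb_n<b_{k+1}-rb_n$, a contradiction.
\item If $t\neq1$, then $b_{k+1}-b_s=r(a_t-a_1)$ with $k+1\ne s$, contradicting the second difference statement.
\end{itemize}

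Hence $x=y$, which is \eqref{lem} for $i=k+1$, and the induction closes.

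The main obstacle is finding the right comparison. One has to compare the two ``first rows'' $A+ra_1$ and $B-rb_n$ element by element, and notice that any element of $S$ lying below the $(k+1)$-st entry of one row must come from an earlier entry of the other row, because $a_1$ is minimal and $b_n$ is maximal. Once this monotonicity observation is in place, the hypothesis that $A+rA$ itself has $n^2$ distinct elements is not needed beyond what is used implicitly. Each contradiction comes directly from one of the two auxiliary distinctness assumptions.
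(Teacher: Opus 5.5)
Your proof is correct and follows essentially the same route as the paper. Both argue by induction on $i$, comparing $a_i+ra_1$ with $b_i-rb_n$, using that $a_1$ is minimal and $b_n$ is maximal, and drawing the contradictions from the distinctness of $A+rB$ and $-rA+B$, exactly as in the paper's proof that the index maps $s$ and $u$ are injective. The difference is only organizational: you inline that injectivity argument into a direct trichotomy. This incidentally shows that the distinctness of $A+rA=B-rB$ itself is never needed, whereas the paper uses it implicitly to get $u(j)=j$ and $s(j)=j$ for $j<i$.
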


\begin{proof}
Since $A + rA = B - rB$, we may write \begin{equation}\label{ba}
                                        b_k-rb_n=a_{s(k)}+ra_{t(k)},\ a_k+ra_1=b_{u(k)}-rb_{v(k)}.
                                      \end{equation}
We claim that $s$ and $u$ are permutations from 1 to $n$.

Rewrite $b_k - r b_n = a_{s(k)} + r a_{t(k)}$ in (\ref{ba}) as
$$b_k - r a_{t(k)} = a_{s(k)} + r b_n.$$
Suppose that there exist $k \neq k'$ such that $s(k) = s(k')$. Then
$$a_{s(k)} + r b_n = a_{s(k')} + r b_n,$$
so we have
$$b_k - r a_{t(k)} = b_{k'} - r a_{t(k')}.$$
Since the set $-rA + B$ has $n^2$ distinct elements (the mapping $(i,j) \mapsto -r a_i + b_j$ is injective), it follows that $t(k) = t(k')$ and $k = k'$, a contradiction.
Thus $s$ is injective, therefore a permutation.
The fact that $u$ is a bijection follows similarly.
Rewrite $a_k + r a_1 = b_{u(k)} - r b_{v(k)}$ in (\ref{ba})  as
$$b_{u(k)} - r a_1 = a_k + r b_{v(k)}.$$
Suppose that there exist $k \neq k'$ such that $u(k) = u(k')$, then
$$a_k + r b_{v(k)} =b_{u(k)} - r a_1 = b_{u(k')} - r a_1= a_{k'} + r b_{v(k')}.$$
Since the set $A + rB$ has $n^2$ distinct elements,
it follows that $(k, v(k)) = (k', v(k'))$, a contradiction.

Now we prove (\ref{lem}) by induction on $i$.

\noindent \textbf{Base case: } $i = 1$.
Since the smallest element of $A + rA = \{a_i + r a_j : 1 \leq i,j \leq n\}$ is $a_1 + r a_1$ and the smallest element of $B - rB = \{b_i - r b_j : 1 \leq i,j \leq n\}$ is $b_1 - r b_n$, it follows from the condition $A + rA = B - rB$ that
\[
b_1 - r b_n = a_1 + r a_1,
\] showing the base case.

\noindent \textbf{Inductive step:}
Assume that for all $j < i$, we have
$$b_j - r b_n = a_j + r a_1.$$
We prove the statements for step $i$.
Indeed, recall $a_i + r a_1 = b_{u(i)} - r b_{v(i)}$ in (\ref{ba}). Since $u$ is a bijection by the above claim and $u(j)=j$ for all $j < i$ by the inductive hypothesis, we have $u(i) \ge i$. Hence, $b_{u(i)} \ge b_i$ by the increasing order assumption, and so
$$a_i + r a_1 = b_{u(i)} - r b_{v(i)} \ge b_i - r b_{v(i)}\ge b_i - r b_{n}.$$
Similarly, recall $b_i - r b_n = a_{s(i)} + r a_{t(i)}$ in (\ref{ba}). Since $s$ is a bijection and $s(j)=j$ for $j < i$, we have $s(i) \ge i$. Hence, $a_{s(i)} \ge a_i$, and so
$$b_i - r b_n = a_{s(i)} + r a_{t(i)} \ge a_i + r a_{t(i)}\ge a_i + r a_{1}.$$
This completes the inductive step.
By induction, (\ref{lem}) holds for all $i = 1, \dots, n$.
\end{proof}
\section{Proof of the Theorem}
We are now in a position to prove our theorem.
\begin{proof}[Proof of Theorem \ref{1.1}] We write $\Phi=rx+A$ and $\Psi=-rx+B$, where $r>0$. By the Moran equation, we can write $A = \{a_i\}_{i=1}^n$ and $B = \{b_i\}_{i=1}^n$ with elements arranged in increasing order, and we know that $$-r^2x+rB + A=\Phi\circ \Psi=\Psi \circ \Phi=-r^2x-rA + B$$ satisfies the OSC by \cite[Proposition 2.2]{FengWang2009}, thus the set $$rB + A=-rA + B$$ has cardinality $n^2$ (with all elements distinct). By \cite[Proposition 2.1]{FengWang2009},
we know that $$r^2x+A + rA=\Phi\circ \Phi=\Psi \circ \Psi=r^2x+B - rB$$ satisfies the OSC, thus the set $$A + rA = B - rB$$has cardinality $n^2$. It follows by Lemma \ref{2.2} that, for every $1 \leq i \leq n$,
\[
b_i - r b_n = a_i + r a_1.
\]
Then by using Lemma \ref{2.1}, taking $C=r b_n +r a_1,$ we know that $$A=\frac{C(1-r)}{r}-A,$$
showing that $K$ is symmetric (see for example, the last line in \cite[Proof of
Lemma 3.4]{FengWang2009}, which states that the attractor of a homogeneous
IFS $\rho x+A$ is symmetric when $A$ is symmetric).
\end{proof}
\bibliographystyle{siam}
\bibliography{ref1}
\begin{acknowledgement}
The author thanks Liming Ling and Jian-Ci Xiao for valuable suggestions.
\end{acknowledgement}

\end{document}